
\documentclass[10pt]{amsart}
\usepackage{enumerate,amsmath,amssymb,latexsym,
amsfonts, amsthm, amscd, MnSymbol}


\setlength{\textwidth}{14.5cm}
\setlength{\textheight}{22cm}

\hoffset=-2.0cm
\voffset=-.8cm


\theoremstyle{plain}

\newtheorem{theorem}{Theorem}

\newtheorem*{theorem*}{Theorem}

\numberwithin{equation}{section}

\newcommand{\ra}{\rightarrow}

\addtocounter{theorem}{-1}


\begin{document}

\title {Classical theorems in the Implicational Propositional Calculus}

\date{}

\author[P.L. Robinson]{P.L. Robinson}

\address{Department of Mathematics \\ University of Florida \\ Gainesville FL 32611  USA }

\email[]{paulr@ufl.edu}

\subjclass{} \keywords{}

\begin{abstract}

For formulas of the Implicational Propositional Calculus (IPC) that are theorems of the classical Propositional Calculus (PC) we show that PC proofs  yield IPC proofs. As a consequence, completeness of PC yields completeness of IPC. 

\end{abstract}

\maketitle

\medbreak

\section{Theorem} 

\medbreak 

Consider the following natural question regarding the relationship between PC (the classical Propositional Calculus) and IPC (the Implicational Propositional Calculus): let $X$ and $Y$ be well-formed formulas of IPC and assume that there exists a deduction of $Y$ from $X$ within PC; does it follow that there exists a deduction of $Y$ from $X$ within IPC itself? 

\medbreak 

To be specific, we follow the approach to PC taken by Church [1]: the set of propositional variables is augmented by a propositional constant $\mathfrak{f}$ for falsity; the conditional $\supset$ is the only primitive connective and Modus Ponens is the only inference rule; and there are three axiom schemes, namely 
\medbreak 
$(\#1) \; A \supset (B \supset A)$ \par 
$(\#2) \; [A \supset (B \supset C)] \supset [(A \supset B) \supset (A \supset C)]$ \par
$(\sim \sim) \; \; [(A \supset \mathfrak{f}) \supset \mathfrak{f}] \supset A.$ \par
\medbreak 
\noindent
For IPC we follow Robbin [2] in modifying the foregoing specifications: the propositional constant $\mathfrak{f}$ is removed completely; and the double negation axiom scheme $(\sim \sim)$ is replaced by the Peirce axiom scheme 
\medbreak 
$(\mathbb{P}) \; \; \; [(A \supset B) \supset A] \supset A.$ \par 
\medbreak 
\noindent
We write $K$ for the set comprising all PC formulas and $L$ for the set comprising all IPC formulas; thus, $L \subset K$. We also use $K$ and $L$ to indicate the corresponding systems: thus, $X \vdash_K Y$ indicates the existence of a deduction of $Y$ from $X$ within PC while $\vdash_L Z$ indicates that $Z$ is a theorem of IPC. Lastly, we use $\equiv$ to indicate IPC equivalence: $X \equiv Y$ precisely when $X \vdash_L Y$ and $Y \vdash_L X$. 

\medbreak 

In these terms, our opening question is as follows: let $X$ and $Y$ be formulas in $L$; does $X \vdash_K Y$ imply  $X \vdash_L Y$? 

\medbreak 

Of course, the answer to this question is in the affirmative. One argument in support of this answer may be presented in three steps. Step (1): $X \vdash_K Y$ implies $\vdash_K X \supset Y$ by the Deduction Theorem (DT). Step (2): The PC theorem $X \supset Y$ is a tautology by the soundness of PC; so $X \supset Y$ is an IPC theorem by the completeness of IPC. Step (3): $\vdash_L X \supset Y$ implies $X \vdash_L Y$ by Modus Ponens (MP). 

\medbreak 

Our main purpose here is to present a theorem that facilitates an alternative argument and has an attractive by-product. Let $T(K)$ denote the set comprising all PC theorems and $T(L)$ the set comprising all IPC theorems. 

\medbreak 

\begin{theorem} \label{main}
$T(L) = L \cap T(K)$.
\end{theorem}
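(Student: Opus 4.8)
The plan is to establish the two inclusions separately, the first being routine and the second carrying essentially all of the difficulty.

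For $T(L) \subseteq L \cap T(K)$: every IPC theorem is trivially an IPC formula, so $T(L) \subseteq L$. For $T(L) \subseteq T(K)$ I would check that each ingredient of an IPC proof is available in PC. The schemes $(\#1)$ and $(\#2)$ and the rule MP are common to both systems, so it remains only to verify that the Peirce scheme $(\mathbb{P})$ consists of PC theorems, i.e.\ $\vdash_K [(A \supset B) \supset A] \supset A$. Granting this, any IPC proof transcribes into a PC proof by splicing in, at each appeal to $(\mathbb{P})$, a fixed PC derivation of the relevant Peirce instance; hence $T(L) \subseteq T(K)$.

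The substantial inclusion is $L \cap T(K) \subseteq T(L)$: given an $\mathfrak{f}$-free $Z$ with $\vdash_K Z$, I must produce $\vdash_L Z$. I would transform the PC proof by a double-negation translation in which the goal $Z$ itself plays the role of falsity. Write $\neg X := X \supset Z$ and, for a PC formula $W$, let $W^{*}$ be the IPC formula obtained by replacing every occurrence of $\mathfrak{f}$ by $Z$. The invariant I would carry along the proof $W_1, \dots, W_m = Z$ is $\vdash_L \neg\neg W_j^{*}$ for every line $j$. Since $Z$ is $\mathfrak{f}$-free, the final line then gives $\vdash_L \neg\neg Z = (Z \supset Z) \supset Z$, and detaching the IPC theorem $Z \supset Z$ by MP yields $\vdash_L Z$. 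Maintaining the invariant is easy at most steps: for instances of $(\#1)$ and $(\#2)$ the translate $W_j^{*}$ is itself an IPC theorem, whence $\vdash_L \neg\neg W_j^{*}$ via the purely implicational theorem $\vdash_L X \supset \neg\neg X$; and MP is absorbed by the equally elementary $\neg\neg X, \neg\neg(X \supset Y) \vdash_L \neg\neg Y$. Both facts use only $(\#1)$, $(\#2)$, MP and the Deduction Theorem for IPC.

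The \emph{main obstacle} is the double-negation axiom $(\sim\sim)$: here $W_j^{*}$ has the shape $\neg\neg A^{*} \supset A^{*}$, so I must show $\vdash_L \neg\neg(\neg\neg A^{*} \supset A^{*})$. In full intuitionistic logic this is a theorem, but its standard derivation invokes ex falso (that is, $Z \supset A^{*}$), which is unavailable once $\mathfrak{f}$ has been discarded. This is exactly where Peirce's law earns its place. I would first derive $\neg A^{*}$ from the hypothesis $H := \neg(\neg\neg A^{*} \supset A^{*})$, needing no ex falso: assuming $A^{*}$ one gets $\neg\neg A^{*} \supset A^{*}$ by $(\#1)$, and then $Z$ from $H$. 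Next, under the auxiliary hypothesis $Z \supset A^{*}$ one again obtains $\neg\neg A^{*} \supset A^{*}$ (feed $\neg A^{*}$ into $\neg\neg A^{*}$ to reach $Z$, then cross $Z \supset A^{*}$ to $A^{*}$), and applying $H$ gives $Z$; discharging yields $(Z \supset A^{*}) \supset Z$. A single appeal to the Peirce instance $[(Z \supset A^{*}) \supset Z] \supset Z$ then delivers $Z$, establishing $\vdash_L \neg\neg(\neg\neg A^{*} \supset A^{*})$. Thus Peirce's law supplies precisely the classical strength that ex falso would otherwise provide, completing the $(\sim\sim)$ case and hence the translation.
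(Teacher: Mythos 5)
Your proof is correct, and while it rests on the same central idea as the paper -- let the target formula $Z$ itself play the role of $\mathfrak{f}$, push the PC proof through a $Z$-relative double-negation, and let Peirce's law discharge the double negation -- the mechanics are genuinely different. The paper's translation $\phi$ sends $\mathfrak{f} \mapsto Q$ (a parameter fixed to $Z$ only at the very end) \emph{and double-negates every atom}, $\phi(p) = (p \supset Q) \supset Q$; its invariant is that each $\phi(Z_n)$ is outright provable, so that the translated sequence is literally an IPC proof. That stronger invariant forces the atom-level double negation (the paper notes the lemma $Q \vdash_L \phi(W)$ fails for $\phi(p)=p$), and it is this lemma, together with "$Q \vdash_L W$ implies $QQW \vdash_L W$", that absorbs the $(\sim\sim)$ axiom; a further equivalence $\phi(Z) \equiv QQZ$ is then needed to return from $\phi(Z)$ to $(Z \supset Q)\supset Q$. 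You instead use the bare substitution $\mathfrak{f} \mapsto Z$ and carry the weaker invariant $\vdash_L \neg\neg W_j^{*}$, which is the Glivenko/Kolmogorov pattern: your MP step $\neg\neg X, \neg\neg(X\supset Y) \vdash_L \neg\neg Y$ is the paper's Theorem 2 in disguise, but you avoid its Theorems 3--5 entirely, paying for this with a direct, Peirce-powered derivation of $\vdash_L \neg\neg(\neg\neg A^{*} \supset A^{*})$ -- which you carry out correctly, and which is exactly the point where ex falso must be replaced by Peirce. Your ending is also marginally simpler (detach $Z \supset Z$ from $(Z\supset Z)\supset Z$ by MP, rather than invoking another Peirce instance). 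In short: the paper's route yields a cleaner line-by-line proof transformation with reusable $Q$-generic lemmas; yours is a leaner purely implicational Glivenko argument concentrated in one harder case.
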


\medbreak 

Our proof of this theorem is given in the next section; in principle, when $Z \in L$ has a PC proof, we show that this may be converted into an IPC proof of $Z$. 

\medbreak 

An alternative argument supporting the affirmative answer to our opening question is now immediate: if $X \in L$ and $Y \in L$ then $X \supset Y \in L$ so that $\vdash_K X \supset Y$ implies $\vdash_L X \supset Y$ by Theorem \ref{main}; this replaces Step (2) of our earlier argument, while Step (1) and Step (3) remain unchanged. 

\medbreak 

As a pleasing by-product, we may add to a growing list of proofs that the Implicational Propositional Calculus is complete (see [3] and references therein) by deducing completeness of IPC from completeness of PC: indeed, if $Z \in L$ is a tautology then $Z \in T(K)$ by completeness of PC so that $Z \in T(L)$ by Theorem \ref{main}. 

\medbreak 

\section{Proof}

\medbreak 

We begin with some general remarks. The axiom schemes $(\#1)$ and $(\# 2)$ are common to PC and IPC; these axiom schemes suffice for the Deduction Theorem (DT) as a derived inference rule: in both systems, if $\Gamma$ is a set of formulas then $\Gamma \cup \{ A \} \vdash B$ implies $\Gamma \vdash A \supset B$. A routine consequence is Hypothetical Syllogism (HS) in both systems: $A \supset B, B \supset C \vdash A \supset C$. 

\medbreak 

Further, the Peirce axiom scheme $(\mathbb{P})$ of IPC is a theorem scheme in PC: we may verify this as follows. Let $A, B \in K$ be PC formulas. First, $\mathfrak{f} \vdash_K B$ because $\mathfrak{f} \vdash_K (B \supset \mathfrak{f}) \supset \mathfrak{f}$ by $(\#1)$ and $(B \supset \mathfrak{f}) \supset \mathfrak{f} \vdash_K B$ by $(\sim \sim)$. By MP and DT it follows that $A \supset \mathfrak{f} \vdash_K A \supset B$. Two successive applications of MP yield $(A \supset B) \supset A, A \supset \mathfrak{f} \vdash_K \mathfrak{f}$ whence DT yields $(A \supset B) \supset A \vdash_K (A \supset \mathfrak{f}) \supset \mathfrak{f}$. Finally, $(\sim \sim)$ yields $(A \supset B) \supset A \vdash_K A$ and DT yields $\vdash_K [(A \supset B) \supset A] \supset A$. 

\medbreak 

We now recall a technical device familiar from the theory of IPC. Let $Q \in L$ be a fixed IPC formula; a precise choice will be made later, but for the time being this formula is arbitrary. For each IPC formula $Z \in L$ we write $QZ := Q(Z) := Z \supset Q$; iteration produces $QQ Z = (Z \supset Q) \supset Q$. We shall require several properties of this device in our proof of Theorem \ref{main}. 

\begin{theorem} \label{eq}
If $Z \in L$ is an IPC formula and $Q \vdash_L Z$ then $QQ Z \vdash_L Z$. 
\end{theorem}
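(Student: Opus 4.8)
The plan is to reduce the whole statement to a single application of the Peirce axiom scheme $(\mathbb{P})$. Unwinding the notation, what must be shown is that the lone hypothesis $QQZ = (Z \supset Q) \supset Q$ yields $Z$ in IPC. The instance of $(\mathbb{P})$ obtained by setting $A := Z$ and $B := Q$ reads $[(Z \supset Q) \supset Z] \supset Z$; so if I can deduce its antecedent $(Z \supset Q) \supset Z$ from $QQZ$, then one application of MP delivers $Z$ and the proof is complete. Thus the entire task collapses to producing $(Z \supset Q) \supset Z$ from $(Z \supset Q) \supset Q$.

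To bridge that gap I first convert the hypothesis of the theorem into usable form. The assumption $Q \vdash_L Z$ gives, via DT, the IPC theorem $\vdash_L Q \supset Z$. Now I have two implications available with matching link $Q$, namely the hypothesis $(Z \supset Q) \supset Q$ and the theorem $Q \supset Z$; applying Hypothetical Syllogism with $A := Z \supset Q$, $B := Q$, $C := Z$ chains them into exactly $(Z \supset Q) \supset Z$.

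Assembling the pieces: from $QQZ$ I obtain $(Z \supset Q) \supset Z$ by HS against the theorem $Q \supset Z$; the Peirce instance $[(Z \supset Q) \supset Z] \supset Z$ is an axiom; and MP then yields $Z$. Hence $QQZ \vdash_L Z$, as required.

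I do not expect a genuine obstacle here: once the target is rewritten explicitly, the argument is essentially forced. The only point requiring a moment's thought is the recognition that the hypothesis $Q \vdash_L Z$ must first be promoted, through DT, to the \emph{theorem} $Q \supset Z$ before HS can be invoked, and the correct choice of the Peirce instance ($A := Z$, $B := Q$) so that its antecedent is precisely what HS produces.
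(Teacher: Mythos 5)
Your proof is correct and follows essentially the same route as the paper's: promote $Q \vdash_L Z$ to $\vdash_L Q \supset Z$ by DT, obtain $(Z \supset Q) \supset Z$ from the hypothesis $QQZ$, and finish with the Peirce instance $[(Z \supset Q) \supset Z] \supset Z$ and MP. The only difference is that you invoke HS as a packaged derived rule where the paper unwinds that same step explicitly via MP and DT.
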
 

\begin{proof} 
As DT yields $\vdash_L Q \supset Z$ and MP yields $(Z \supset Q) \supset Q, Z \supset Q \vdash_L Q$ it follows that $(Z \supset Q) \supset Q, Z \supset Q \vdash_L Z$ by MP again whence $(Z \supset Q) \supset Q  \vdash_L (Z \supset Q) \supset Z$ by DT again. As $\vdash_L [(Z \supset Q) \supset Z] \supset Z$ is an instance of $(\mathbb{P})$, a final application of MP yields $(Z \supset Q) \supset Q \vdash Z$.
\end{proof} 

\medbreak 

Note that $Z \vdash_L QQ Z$ in any case: $Z, Z \supset Q \vdash_L Q$ by MP so $Z \vdash_L (Z \supset Q) \supset Q$ by DT. 

\medbreak 

\begin{theorem} \label{QQ}
If $X \in L$ and $Y \in L$ are IPC formulas then $QQ (X \supset Y) \vdash_L QQ X \supset QQ Y$. 
\end{theorem}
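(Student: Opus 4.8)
The plan is to reduce the claim to a purely positive derivation by peeling off implications with the Deduction Theorem and then reassembling with Modus Ponens and Hypothetical Syllogism; no appeal to Peirce's scheme or to Theorem \ref{eq} should be needed, since this is just the monotonicity of the double-negation-style operator $QQ$ under $\supset$. Unwinding the notation, the target $QQ(X \supset Y) \vdash_L QQ X \supset QQ Y$ reads $((X \supset Y) \supset Q) \supset Q \vdash_L QQ X \supset QQ Y$.

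First I would apply DT twice to simplify the right-hand side. By DT it suffices to prove $QQ(X \supset Y),\, QQ X \vdash_L QQ Y$, and since $QQ Y = (Y \supset Q) \supset Q$, a second application of DT reduces matters to the single goal
$$QQ(X \supset Y),\; QQ X,\; Y \supset Q \;\vdash_L\; Q.$$
Everything now comes down to deriving $Q$ from these three hypotheses.

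The key device is to feed $QQ(X \supset Y) = ((X \supset Y) \supset Q) \supset Q$ with its antecedent $(X \supset Y) \supset Q$ so that one MP delivers $Q$. To manufacture that antecedent I would invoke DT once more: assuming additionally $X \supset Y$, Hypothetical Syllogism applied to $X \supset Y$ and $Y \supset Q$ yields $X \supset Q$, and then MP against $QQ X = (X \supset Q) \supset Q$ produces $Q$. Discharging the temporary hypothesis $X \supset Y$ by DT gives $QQ X,\, Y \supset Q \vdash_L (X \supset Y) \supset Q$. A final MP with $QQ(X \supset Y)$ then yields $Q$, which is exactly the reduced goal above; retracing the DT steps recovers the stated deduction.

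I do not anticipate a genuine obstacle here: every step uses only DT, MP, and HS, all available from the shared axiom schemes $(\#1)$ and $(\#2)$, so the result in fact holds already in minimal implicational logic. The only care required is bookkeeping — tracking which hypotheses are active at each stage and discharging them in the correct order, and keeping the nested $Q$'s straight when reading $QQ(\cdot)$ as $((\cdot) \supset Q) \supset Q$.
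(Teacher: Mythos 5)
Your proposal is correct and is essentially the paper's own proof: both derive $Q$ from $QQX$, $Y \supset Q$, and the temporary hypothesis $X \supset Y$ via HS and MP, discharge $X \supset Y$ by DT to obtain $(X \supset Y) \supset Q$, apply MP against $QQ(X \supset Y)$, and finish with two applications of DT. Your observation that Peirce's scheme is not needed also matches the paper's remark following this theorem.
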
 

\begin{proof} 
Note that $QY, X \supset Y \vdash_L QX$ by HS whence $QQX, QY, X \supset Y \vdash Q$ by MP and therefore $QQ X, Q Y \vdash_L Q (X \supset Y)$ by DT. Thus $QQ (X \supset Y), QQX, Q Y \vdash_L Q$ by MP and so two applications of DT conclude the proof. 
\end{proof} 

\medbreak 

This deduction did not use the Peirce axiom scheme; the next deduction uses it. 

\begin{theorem} \label{supset}
If $X \in L$ and $Y \in L$ are IPC formulas then $ QQ X \supset QQ Y \vdash_L QQ (X \supset Y)$. 
\end{theorem}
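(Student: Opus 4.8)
The plan is to strip the outer implication of the target with the Deduction Theorem and then drive toward $Q$. By definition $QQ(X \supset Y) = [(X \supset Y) \supset Q] \supset Q$; abbreviating $W := (X \supset Y) \supset Q$ (so that $W = Q(X \supset Y)$ and $QQ(X \supset Y) = W \supset Q$), it suffices by DT to derive $QQ X \supset QQ Y,\, W \vdash_L Q$. I would therefore retain the hypothesis $H := QQX \supset QQY$, adjoin $W$, and aim to produce $Q$.

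The strategy is to manufacture both $QQX$ and $QY$ from $W$, route $QQX$ through $H$, and then cancel. One half is easy: to get $W \vdash_L QY$, adjoin $Y$; scheme $(\#1)$ gives $Y \vdash_L X \supset Y$, MP against $W$ gives $Q$, and DT yields $W \vdash_L Y \supset Q = QY$. Granting also the harder claim $W \vdash_L QQX$ (discussed below), one MP against $H$ converts $QQX$ into $QQY = QY \supset Q$, and a further MP against $QY$ yields $Q$. A concluding DT discharging $W$ then produces $W \supset Q = QQ(X \supset Y)$, as required.

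The crux, and the only place the Peirce scheme $(\mathbb{P})$ should be needed, is the deduction $W \vdash_L QQX$, i.e. $W \vdash_L (X \supset Q) \supset Q$. Here the usual intuitionistic route via ex falso is unavailable, since $Q$ is an arbitrary formula rather than genuine falsity, and Peirce must supply the missing classical strength. I would adjoin $X \supset Q$ and seek $Q$: adjoining in turn $Q \supset Y$, HS applied to $X \supset Q$ and $Q \supset Y$ gives $X \supset Y$, whence MP against $W$ gives $Q$. Discharging $Q \supset Y$ by DT yields $(Q \supset Y) \supset Q$, which is precisely the antecedent of the Peirce instance $[(Q \supset Y) \supset Q] \supset Q$ (the case $A := Q$, $B := Y$ of $(\mathbb{P})$); MP then delivers $Q$. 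A final DT gives $W \vdash_L (X \supset Q) \supset Q = QQX$, completing the hard step and, with the assembly above, the whole deduction.
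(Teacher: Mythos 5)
Your proposal is correct and follows essentially the same route as the paper: both derive $Q(X\supset Y)\vdash_L QY$ via $(\#1)$, derive $Q(X\supset Y)\vdash_L QQX$ by adjoining $X\supset Q$ and $Q\supset Y$, applying HS, discharging to $(Q\supset Y)\supset Q$, and invoking the Peirce instance $[(Q\supset Y)\supset Q]\supset Q$, then combine the two with two applications of MP and a final DT. Only the order of presentation of the two halves differs.
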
 

\begin{proof} 
On the one hand, $Q (X \supset Y) \vdash_L QQX$: indeed, $Q X, Q \supset Y \vdash_L X \supset Y$ by HS so that $Q (X \supset Y), Q X, Q \supset Y \vdash_L Q$ by MP and $Q (X \supset Y), Q X \vdash_L (Q \supset Y) \supset Q$ by DT while $[(Q \supset Y) \supset Q] \supset Q$ is an instance of $(\mathbb{P})$; now $Q (X \supset Y), Q X \vdash_L Q$ by MP and we apply DT. On the other hand, $Q (X \supset Y) \vdash_L Q Y$: in fact, $Y \vdash_L X \supset Y$ from $(\#1)$ so that $(X \supset Y) \supset Q, Y \vdash_L Q$ by MP; now apply DT. As $Q (X \supset Y)$ yields $QQ X$ and $Q Y$ it follows that $QQ X \supset QQY, Q (X \supset Y) \vdash_L Q$ by MP twice and then DT concludes the proof. 
\end{proof} 

\medbreak 

By definition, the degree of a formula is the number of conditionals in its formation. Now, when $Z \in K$ we define $\phi (Z) \in L$ by induction on degree as follows. If $Z$ has degree zero then $Z$ is either the constant $\mathfrak{f}$ or a variable $p$; we define $\phi(\mathfrak{f}) = Q$ and $\phi (p) = QQ p$. If $Z$ has positive degree then $Z = X \supset Y$ for unique $X$ and $Y$ in $K$ of lesser degree; we define $\phi (Z) = \phi (X) \supset \phi (Y)$. This map $\phi : K \ra L$ has a number of properties, among which we draw attention to two. 

\medbreak

\begin{theorem} \label{phiK}
If $Z \in K$ then $Q \vdash_L \phi (Z)$. 
\end{theorem}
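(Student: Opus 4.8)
The plan is to induct on the degree of $Z$. The key structural fact I would isolate at the outset is that axiom scheme $(\#1)$ functions as a weakening principle: since $A \supset (B \supset A)$ is an instance of $(\#1)$ for all $A, B \in L$, an application of MP shows that whenever $Q \vdash_L A$ holds, so does $Q \vdash_L B \supset A$ for every $B \in L$. This single observation will drive both the base case and the inductive step.

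For the base case, $Z$ has degree zero, so $Z$ is either the constant $\mathfrak{f}$ or a variable $p$. If $Z = \mathfrak{f}$ then $\phi(Z) = Q$ and $Q \vdash_L Q$ holds trivially. If $Z = p$ then $\phi(Z) = QQ p = (p \supset Q) \supset Q$; taking $A = Q$ and $B = p \supset Q$ in the weakening principle, and using $Q \vdash_L Q$, immediately gives $Q \vdash_L (p \supset Q) \supset Q = \phi(p)$.

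For the inductive step, suppose $Z$ has positive degree, so $Z = X \supset Y$ with $X$ and $Y$ in $K$ of lesser degree, whence $\phi(Z) = \phi(X) \supset \phi(Y)$. Here I would appeal to the induction hypothesis for $Y$ alone: it yields $Q \vdash_L \phi(Y)$, and the weakening principle with $A = \phi(Y)$ and $B = \phi(X)$ then produces $Q \vdash_L \phi(X) \supset \phi(Y) = \phi(Z)$. Notably the induction hypothesis for $X$ is not needed.

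I do not anticipate any serious obstacle; the argument is a short induction whose only ingredient beyond the definition of $\phi$ is the weakening principle extracted from $(\#1)$, and none of the preceding Theorems \ref{eq}, \ref{QQ}, \ref{supset} (nor the Peirce scheme) are required. If anything, the single point demanding care is recognizing in the variable case that $\phi(p)$ already has the shape $(p \supset Q) \supset Q$, so that it is $Q$ itself, rather than $p$, that plays the role of the consequent $A$ in the weakening step.
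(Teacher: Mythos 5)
Your proposal is correct and follows essentially the same route as the paper: induction on degree, with both the variable base case and the inductive step handled by the weakening instance of $(\#1)$ plus MP (the paper uses exactly the instances $Q \supset (Qp \supset Q)$ and $\phi(Y) \supset (\phi(X) \supset \phi(Y))$). Your explicit remarks that only the hypothesis for $Y$ is needed and that $Q$ plays the role of the consequent in the variable case are accurate and match the paper's argument.
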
 

\begin{proof} 
By induction on degree, of course. The case of zero degree is clear: $Q \vdash_L Q$ is immediate, while the deduction $Q \vdash_L Qp \supset Q$ follows from an instance of $(\# 1)$ by MP. For the case of positive degree, note that if $Q \vdash_L \phi (Y)$ then $Q \vdash_L \phi (X) \supset \phi (Y)$ results from the instance $\vdash_L \phi (Y) \supset (\phi (X) \supset \phi (Y))$ of $(\# 1)$ by MP. 
\end{proof} 

\medbreak 

Had we simply defined $\phi (p) = p$ when $p$ is a propositional variable, this claim would fail. 

\medbreak 

\begin{theorem} \label{phiL}
If $Z \in L$ then $\phi (Z) \equiv QQ Z$. 
\end{theorem}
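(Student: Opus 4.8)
The plan is to prove Theorem~\ref{phiL} by induction on the degree of $Z \in L$, exactly as $\phi$ was defined. The statement to establish is the IPC equivalence $\phi(Z) \equiv QQ Z$, which unpacks as the two deductions $\phi(Z) \vdash_L QQ Z$ and $QQ Z \vdash_L \phi(Z)$. Since $Z$ is an IPC formula, it contains no occurrence of $\mathfrak{f}$, so the base case involves only propositional variables.

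For the base case, suppose $Z$ has degree zero. Then $Z$ is a variable $p$, and by definition $\phi(p) = QQ p = QQ Z$, so the equivalence holds trivially (indeed $\phi(Z)$ and $QQ Z$ are literally the same formula, and $\equiv$ is reflexive). I would note explicitly that the case $Z = \mathfrak{f}$ does not arise precisely because $Z \in L$.

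For the inductive step, suppose $Z$ has positive degree, so $Z = X \supset Y$ with $X, Y \in L$ of lesser degree. By definition $\phi(Z) = \phi(X) \supset \phi(Y)$, and the induction hypothesis gives $\phi(X) \equiv QQ X$ and $\phi(Y) \equiv QQ Y$. The key observation is that IPC equivalence is a congruence for $\supset$: from $\phi(X) \equiv QQ X$ and $\phi(Y) \equiv QQ Y$ one obtains $\phi(X) \supset \phi(Y) \equiv QQ X \supset QQ Y$. This congruence property follows routinely from HS and DT, and I would either invoke it as a standard fact or spell out the two needed deductions (replacing the antecedent and replacing the consequent, each using HS). Thus $\phi(Z) \equiv QQ X \supset QQ Y$. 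It then remains to link $QQ X \supset QQ Y$ with $QQ(X \supset Y) = QQ Z$, and this is exactly the content of Theorem~\ref{QQ} and Theorem~\ref{supset}, which together give $QQ(X \supset Y) \equiv QQ X \supset QQ Y$. Chaining these equivalences yields $\phi(Z) \equiv QQ Z$, completing the induction.

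The main obstacle, and the reason the earlier lemmas were proved, is the interchange between $QQ(X \supset Y)$ and $QQ X \supset QQ Y$; this is genuinely two-directional work and the harder direction (Theorem~\ref{supset}) is where the Peirce axiom is essential. Once those are in hand, the remaining labor is the congruence of $\equiv$ under $\supset$, which is the only part I would need to verify carefully, since everything else is bookkeeping over the inductive definition of $\phi$.
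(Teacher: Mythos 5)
Your proof is correct and follows essentially the same route as the paper: induction on degree, with the base case immediate from $\phi(p) = QQp$ and the inductive step chaining the congruence of $\equiv$ under $\supset$ with Theorem~\ref{QQ} and Theorem~\ref{supset}. The only difference is that you make explicit the congruence step (that $\phi(X) \equiv QQX$ and $\phi(Y) \equiv QQY$ give $\phi(X) \supset \phi(Y) \equiv QQX \supset QQY$), which the paper leaves implicit; that is a reasonable point to flag and is indeed routine via HS and DT.
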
 

\begin{proof} 
An IPC formula of zero degree is a variable $p$, for which $\phi (p) = QQ p$ by definition. Consider an IPC formula $Z = X \supset Y$ of positive degree: inductively, $\phi (X) \equiv QQ X$ and $\phi (Y) \equiv QQ Y$; Theorem \ref{QQ} and Theorem \ref{supset} now yield 
$$\phi (Z) := \phi (X) \supset \phi (Y) \equiv QQ X \supset QQ Y \equiv QQ (X \supset Y) = QQ Z.$$
\end{proof} 

\medbreak 

Incidentally, recall [1] that the negation of the PC formula $Z \in K$ is given by $\sim Z := Z \supset \mathfrak{f}$. The map $\phi$ brings out the r\^ole of $Q( \cdot )$ as a partial negation: if $Z \in K$ then 
$$\phi (\sim Z) = \phi (Z \supset \mathfrak{f}) = \phi (Z) \supset \phi(\mathfrak{f}) = \phi(Z) \supset Q = Q \phi(Z).$$

\medbreak 

We are now prepared to prove our main theorem, which we restate for convenience. 

\medbreak 

\medbreak

\noindent
{\bf Theorem 0.} $T(L) = L \cap T(K)$. 

\begin{proof} 
The inclusion $T(L) \subseteq L \cap T(K)$ is clear: a proof in IPC yields a proof in PC once each application of the axiom scheme $(\mathbb{P})$ of IPC is replaced by the (proof of the) corresponding theorem scheme in PC. 

\medbreak 

Now let $Z \in L \cap T(K)$: say the sequence $Z_0, Z_1, \dots , Z_N = Z$ of PC formulas constitutes a PC proof. Consider the following sequence $\mathcal{S}$ of IPC formulas: 
$$\phi (Z_0), \phi (Z_1), \dots , \phi (Z_N) = \phi (Z).$$
If $Z_n = X \supset (Y \supset X)$ is an instance of scheme $(\#1)$ in $K$ then $\phi (Z_n) = \phi (X) \supset (\phi (Y) \supset \phi (X))$ is an instance of $(\#1)$ in $L$; likewise, if $Z_n$ is an instance of scheme $(\#2)$ in $K$ then $\phi (Z_n)$ is an instance of $(\#2)$ in $L$. If $Z_n = [(W \supset \mathfrak{f}) \supset \mathfrak{f}] \supset W$ is an instance of the double negation scheme $(\sim \sim)$ then $\phi (Z_n) = QQ \phi (W) \supset \phi (W)$ is a theorem of IPC on account of Theorem \ref{eq} and Theorem \ref{phiK}. If $Z_n$ follows from earlier terms $Z_m$ and $Z_{\ell} = Z_m \supset Z_n$ by MP then $\phi (Z_n)$ follows from $\phi(Z_m)$ and $\phi (Z_{\ell}) = \phi (Z_m) \supset \phi (Z_n)$ by MP. Thus the sequence $\mathcal{S}$ furnishes an IPC proof of $\phi (Z)$. Half of Theorem \ref{phiL} informs us that $\phi (Z) \vdash_L QQ Z$; appending this derivation we deduce $\vdash_L QQ Z = (Z \supset Q) \supset Q$. 

\medbreak 

Up until this point, $Q \in L$ has been arbitrary; we now take $Q$ to be $Z$ itself, in which case MP combines the foregoing deduction $\vdash_L (Z \supset Z) \supset Z$ with the instance $\vdash_L [(Z \supset Z) \supset Z] \supset Z$ of $(\mathbb{P})$ to yield $\vdash_L Z$. 
\end{proof} 

\bigbreak

\begin{center} 
{\small R}{\footnotesize EFERENCES}
\end{center} 
\medbreak 

[1]  A. Church, {\it Introduction to Mathematical Logic}, Princeton University Press (1956). 

[2] J. W. Robbin, {\it Mathematical Logic - A First Course}, W.A. Benjamin (1969); Dover Publications (2006).

[3] P. L. Robinson, {\it $Q$-tableaux for Implicational Propositional Calculus}, arXiv 1512.03525 (2015). 

\medbreak

\end{document}